\DeclareMathOperator{\rk}{rank}
\DeclareMathOperator{\fl}{flat}
\DeclareMathOperator{\vect}{Vect}
\DeclareMathOperator{\pt}{\ast}
\DeclareMathOperator{\der}{deR}
\DeclareMathOperator{\LL}{L}
\DeclareMathOperator{\FL}{FL}
\DeclareMathOperator{\exact}{exact}
\DeclareMathOperator{\dr}{dR}
\DeclareMathOperator{\odd}{odd}
\DeclareMathOperator{\even}{even}
\DeclareMathOperator{\im}{Im}
\DeclareMathOperator{\ch}{ch}
\DeclareMathOperator{\U}{U}
\DeclareMathOperator{\BU}{BU}
\DeclareMathOperator{\CS}{CS}
\begin{document}
\theoremstyle{definition}
\setlength{\baselineskip}{1.0\baselineskip}
\newtheorem{defi}{Definition}
\newtheorem{remark}{Remark}
\newtheorem{coro}{Corollary}
\newtheorem{exam}{Example}
\newtheorem{lemma}{Lemma}
\newtheorem{thm}{Theorem}
\newtheorem{prop}{Proposition}
\newcommand{\wt}[1]{{\widetilde{#1}}}
\newcommand{\ov}[1]{{\overline{#1}}}
\newcommand{\wh}[1]{{\widehat{#1}}}
\newcommand{\poin}{Poincar$\acute{\textrm{e }}$}
\newcommand{\deff}[1]{{\bf\emph{#1}}}
\newcommand{\boo}[1]{\boldsymbol{#1}}
\newcommand{\abs}[1]{\lvert#1\rvert}
\newcommand{\norm}[1]{\lVert#1\rVert}
\newcommand{\inner}[1]{\langle#1\rangle}
\newcommand{\poisson}[1]{\{#1\}}
\newcommand{\biginner}[1]{\Big\langle#1\Big\rangle}
\newcommand{\set}[1]{\{#1\}}
\newcommand{\Bigset}[1]{\Big\{#1\Big\}}
\newcommand{\BBigset}[1]{\bigg\{#1\bigg\}}
\newcommand{\dis}[1]{$\displaystyle#1$}
\newcommand{\R}{\mathbb{R}}
\newcommand{\N}{\mathbb{N}}
\newcommand{\Z}{\mathbb{Z}}
\newcommand{\Q}{\mathbb{Q}}
\newcommand{\E}{\mathcal{E}}
\newcommand{\G}{\mathcal{G}}
\newcommand{\F}{\mathcal{F}}
\newcommand{\V}{\mathcal{V}}
\newcommand{\W}{\mathcal{W}}
\newcommand{\SSS}{\mathcal{S}}
\newcommand{\h}{\mathbb{H}}
\newcommand{\g}{\mathfrak{g}}
\newcommand{\C}{\mathbb{C}}
\newcommand{\A}{\mathcal{A}}
\newcommand{\M}{\mathcal{M}}
\newcommand{\HH}{\mathcal{H}}
\newcommand{\D}{\mathcal{D}}
\newcommand{\PP}{\mathcal{P}}
\newcommand{\K}{\mathcal{K}}
\newcommand{\RR}{\mathcal{R}}
\newcommand{\RRR}{\mathscr{R}}
\newcommand{\DDD}{\mathscr{D}}
\newcommand{\so}{\mathfrak{so}}
\newcommand{\gl}{\mathfrak{gl}}
\newcommand{\aaa}{\mathbb{A}}
\newcommand{\bbb}{\mathbb{B}}
\newcommand{\DD}{\mathsf{D}}
\newcommand{\ccc}{\bold{c}}
\newcommand{\sss}{\mathbb{S}}
\newcommand{\cdd}[1]{\[\begin{CD}#1\end{CD}\]}
\normalsize
\title{Refined hexagons for differential cohomology}
\author{Man-Ho Ho}
\address{Department of Mathematics\\ Hong Kong Baptist University}
\email{homanho@math.hkbu.edu.hk}
\subjclass[2010]{Primary 53C08, 19L50}
\maketitle
\nocite{*}
\begin{abstract}
Differential characters and differential $K$-theory are refinements of ordinary cohomology
theory and topological $K$-theory respectively, and they are examples of differential
cohomology theory. Each of these differential cohomology theories fits into a hexagon on 
the cohomology level. We show that these differential cohomology theories fit into hexagons 
on the cocycle level, and the hexagons on the cocycle level induce the hexagons on the
cohomology level.
\end{abstract}
\tableofcontents
\section{Introduction}

Given a generalized cohomology theory $E$ its differential extension $\wh{E}$, also known
as differential cohomology \cite{BS10a}, is a refinement of $E$ to the category of smooth
manifolds by incorporating differential form information. Differential character $\wh{H}$
\cite{CS85} and differential $K$-theory $\wh{K}$ \cite{HS05, BS09, FL10, SS10} are examples
of differential extensions of ordinary cohomology theory $H$ and topological $K$-theory
$K$ respectively. $\wh{H}$ and $\wh{K}$ have been studied extensively in recent years due
to its importance in geometry and topology \cite{BS09, FL10, GV09} and its applications
in theoretical physics \cite{F00, HS05}.

Let $X$ be a smooth manifold and $A$ a proper subring of $\R$. Differential characters
$\wh{H}^k(X; \R/A)$ fits into the following hexagon, i.e., the diagonal sequences are
exact and every square and triangle commutes \cite{CS85} (the details is given in Example
\ref{exam 1}).
\begin{equation}\label{eq 1}
\xymatrix{\scriptstyle 0 \ar[dr] & \scriptstyle & \scriptstyle & \scriptstyle &
\scriptstyle 0 \\ & \scriptstyle H^{k-1}(X; \R/A) \ar[rr]^{-\beta} \ar[dr]^i & \scriptstyle
& \scriptstyle H^k(X; A) \ar[ur] \ar[dr]^{\ch} & \scriptstyle \\ \scriptstyle H^{k-1}
(X; \R) \ar[ur]^{\alpha} \ar[dr]_s & \scriptstyle & \scriptstyle\wh{H}^k(X; \R/A)
\ar[ur]^I \ar[dr]^R & \scriptstyle & \scriptstyle H^k(X; \R) \\ \scriptstyle &
\scriptstyle \frac{\Omega^{k-1}(X)}{\Omega^{k-1}_{A}(X)} \ar[rr]_{d} \ar[ur]^a &
\scriptstyle & \scriptstyle\Omega^k_{A}(X) \ar[ur]_{\der} \ar[dr] & \scriptstyle \\
\scriptstyle 0 \ar[ur] & \scriptstyle & \scriptstyle & \scriptstyle & \scriptstyle 0}
\end{equation}
Moreover, this hexagon uniquely characterizes differential characters 
\cite[Theorem 1.1]{SS08}, i.e., for any two functors from the category of smooth manifolds 
to the category of abelian groups fitting into the hexagon (\ref{eq 1}) there exists a 
unique natural equivalence between the two functors. Thus one can define differential 
characters $\wh{H}$ to be a functor fitting into the hexagon (\ref{eq 1}).

In \cite{FL10, SS10} it is proved that differential $K$-theory $\wh{K}$ fits into an
analogous hexagon.
\begin{equation}\label{eq 2}
\xymatrix{\scriptstyle 0 \ar[dr] & \scriptstyle & \scriptstyle & \scriptstyle &
\scriptstyle 0 \\ & \scriptstyle K^{-1}_{\LL}(X; \R/\Z) \ar[rr]^B \ar[dr]^i &
\scriptstyle & \scriptstyle K(X) \ar[ur] \ar[dr]^{\ch} & \scriptstyle \\ \scriptstyle
H^{\odd}(X; \R) \ar[ur]^{\alpha} \ar[dr]_s & \scriptstyle & \scriptstyle\wh{K}_{\FL}(X)
\ar[ur]^I \ar[dr]^R & \scriptstyle & \scriptstyle H^{\even}(X; \R) \\ \scriptstyle &
\scriptstyle \frac{\Omega^{\odd}(X)}{\Omega^{\odd}_{\BU}(X)} \ar[rr]_d \ar[ur]^a &
\scriptstyle & \scriptstyle \Omega^{\even}_{\BU}(X) \ar[ur]_{\der} \ar[dr] &
\scriptstyle \\ \scriptstyle 0 \ar[ur] & \scriptstyle & \scriptstyle & \scriptstyle
& \scriptstyle 0}
\end{equation}
where $K^{-1}_{\LL}(X; \R/\Z)$ is the flat $K$-theory given in \cite{L94}.

The existence and uniqueness of differential extension of generalized cohomology theory
is studied in \cite{BS10a}, where axioms are proposed for defining it. Moreover, the 
existence and the uniqueness of the differential extension are proved under some natural 
assumptions \cite[Theorem 3.10]{BS10a}.

In \cite[Section 3]{HS05} differential cohomology theory is defined to be the cohomology 
of a cochain complex by using homotopic-theoretic method. For differential characters and 
differential $K$-theory there are concrete descriptions of the cochain complexes defining
them in terms of ordinary cohomology and $K$-theory respectively. Thus a natural question 
is whether the hexagons (\ref{eq 1}) and (\ref{eq 2}) hold on the cocycle level in a 
certain sense. We show that these are true. More precisely, we construct hexagons for 
differential characters and differential $K$-theory on the cocycle level (see (\ref{eq 5}) 
and (\ref{eq 6})). We prove that the all the triangles and squares in these hexagons 
commute, certain sequences are exact and the all maps involved are natural, so these 
hexagons are diagrams of functors and natural transformations. It is obvious to see that 
when these hexagons are passed to the cohomology level they coincide with (\ref{eq 1}) 
and (\ref{eq 2}) respectively. Note  that the statements for the hexagons on the cocycle 
level are (expected to be) slightly weaker than the ones on the cohomology level as there
are more non-zero elements in the group of differential cocycles.

Recall from \cite[p. 341]{HS05} that a differential character $x\in\wh{H}^k(X; \R/A)$ is 
said to admit a topological trivialization if there exists $s\in\wh{C}^{k-1}(X; \R/A)$ 
such that $x=\wh{d}s$, or equivalently $x\in\wh{H}^k(X; \R/A)$ admits a topological 
trivialization if and only if $I(x)=0$ \cite[\S 3]{R14}. In our language, 
\cite[Proposition 3.12, Theorem 3.15]{R14} says when $x\in\wh{H}^k(X; \R/\Z)$ admits a 
topological trivialization, the right hand square of (\ref{eq 5}) with $\R/\Z$ 
coefficients commutes, and is a torsor for the right-hand square of (\ref{eq 1}). Thus 
Theorem \ref{thm 1} can be regarded as a generalization of 
\cite[Proposition 3.12, Theorem 3.15]{R14} to differential characters not necessarily 
admitting topological trivializations and to all coefficients.

The paper is organized as follow. In Section 2 we present the necessary background 
materials, including Bunke-Schick axioms of differential extension of generalized 
cohomology theory, the hexagons for differential characters and differential $K$-theory, 
and Hopkins-Singer's differential cocycle model. The main results are proved in Section 3.

\section*{Acknowledgement}

The author would like to thank David Fried as the idea of the paper came up from his very
inspiring comments on the existence of a ``general" hexagon, and to an anonymous person 
for suggesting the use of mapping cone.

\section{Background materials}

We recall the axiomatic definition of differential cohomology given in \cite{BS10a}.
\begin{defi}\cite[Definition 1.1]{BS10a}\label{defi 1.1}
Let $X$ be a smooth manifold, $E$ a generalized cohomology theory and
$\ch:E^\bullet(X)\to H^\bullet(X; V)$ a natural transformation of cohomology theories,
where $V:=E^\bullet(\pt)\otimes_\Z\R$ is a $\Z$-graded real vector space. A
\deff{differential extension} of $(E, \ch)$ is a quadruple $(\wh{E}, R, I, a)$ such
that
\begin{enumerate}
  \item $\wh{E}$ is a contravariant functor from the category of smooth manifolds
        to $\Z$-graded abelian groups,
  \item $R$, $a$ and $I$ are natural transformations of $\Z$-graded abelian
        group-valued functors such that $R\circ a=d$.
\end{enumerate}
Moreover, the following diagram commutes
\begin{equation}\label{eq 3}
\begin{CD}
\wh{E}^\bullet(X) @>I>> E^\bullet(X) \\ @VRVV @VV\ch V \\ \Omega_{d=0}(X; V) @>>\der>
H^\bullet(X; V)
\end{CD}
\end{equation}
where $\Omega_{d=0}(X; V)$ is the group of closed forms on $X$ with values in $V$,
and the following sequence is exact
\begin{equation}\label{eq 4}
\begin{CD}
E^{\bullet-1}(X) @>\ch>> \displaystyle\frac{\Omega^{\bullet-1}(X; V)}{\im(d)} @>a>>
\wh{E}^\bullet(X) @>I>> E^\bullet(X) @>>> 0
\end{CD}
\end{equation}
\end{defi}
The following examples summarize differential characters and Freed-Lott differential
$K$-theory.
\begin{exam}\label{exam 1}
Let $X$ be a smooth manifold and $A$ a proper subring of $\R$. If $E=H$, differential
characters $\wh{H}(X; \R/A)$ is the differential extension of $H(X; A)$. A differential
character of degree $k\in\N$ is a homomorphism $f:Z_{k-1}(X)\to\R/A$ with a differential
form $\omega\in\Omega^k(X)$ such that
$$f(\partial c)=\int_c\omega\mod A,$$
where $c\in C_k(X)$. It can be shown that $\omega\in\Omega^{k-1}_A(X)$, where 
$\Omega^{k-1}_A(X)$ is the group of closed $(k-1)$-forms with periods in $A$. In 
(\ref{eq 1}), the maps are
\begin{displaymath}
\begin{split}
R(f)&=\omega,\\
a(\eta)(z)&=\int_z\eta\mod A,\\
I(f)&=c_f,\qquad\textrm{ which satisfies } j_A(c_f)=[\omega],\\
i([u])&=u|_{Z_{k-1}(X)},\\
\der(\omega)([c])&=\int_{[c]}[\omega],\qquad\textrm{ where } [c]\in H_k(X; \R),\\
s([u])&=u \textrm{ which is a representative in } \der^{-1}([u]),\\
\ch([u])&=j_A([u]),
\end{split}
\end{displaymath}
where $j_A:A\hookrightarrow\R$ is the inclusion of coefficients.
\end{exam}
\begin{exam}\label{exam 2}
If $E=K$, differential $K$-theory $\wh{K}(X)$ is the differential extension of topological
$K$-theory. In this paper we use Freed-Lott differential $K$-theory. Consider an abelian
monoid $\wh{\vect}(X)$ consists of elements of the form $\E=(E, h, \nabla, \phi)$, where
$E\to X$ is a Hermitian bundle, $h$ the Hermitian metric, $\nabla$ a connection compatible
with $h$ and \dis{\phi\in\frac{\Omega^{\odd}(X)}{\Omega^{\odd}_{\exact}(X)}}. $\E_1=\E_2$
if $E_1\cong E_2$ and $\phi^E-\phi^F=\CS(\nabla^F, \nabla^E)\mod\Omega^{\odd}_{\exact}(X)$.
The only relation is $\E_1\sim\E_2$ if and only if there exists $\F=(F, h^F, \nabla^F,
\phi^F)\in\wh{\vect}(X)$ such that $\E_1\oplus\F=\E_2\oplus\F$, i.e., $E_1\oplus F\cong
E_2\oplus F$ and $\phi_1-\phi_2=\CS(\nabla^{E_2}\oplus\nabla^F, \nabla^{E_1}\oplus\nabla^F)
\mod\Omega^{\odd}_{\exact}(X)$. Define \dis{\wh{K}_{\FL}(X)=\wh{\vect}(X)/\sim}. Note that,
strictly speaking the $H^{\odd}(X; \R)$ in the left most entry of (\ref{eq 2}) had been 
identified with $H^{\odd}_{\dr}(X)$. In (\ref{eq 2}), the maps are
\begin{displaymath}
\begin{split}
I(E, h, \nabla, \phi)&=[E],\\
R(E, h, \nabla, \phi)&=\ch(\nabla)+d\phi,\\
a(\phi)&=(0, 0, d, \phi),\\
\alpha(\omega)&=([\C^n], \nabla^{\fl}, \omega)-([\C^n], \nabla^{\fl}, 0),\\
B(\E-\F)&=[E]-[F],
\end{split}
\end{displaymath}
where $i:K^{-1}_{\LL}(X; \R/\Z)\to\wh{K}(X)$ is the canonical inclusion, and
$$\Omega^{\bullet}_{\BU}(X):=\set{\omega\in\Omega^{\bullet}_{d=0}(X)|[\omega]\in\im
(\ch^\bullet:K^{-(\bullet\mod 2)}\to H^\bullet(X; \Q))},$$
where $\bullet\in\set{\even, \odd}$. The maps $s$ and $\der$ are defined in a similar way 
as in Example \ref{exam 1}, and $\ch$ is the Chern character composed with $j_\Q$.
\end{exam}

We recall Hopkins-Singer's differential cocycle model of differential character
\cite[Section 3.2]{HS05}.
\begin{defi}
Fix $q\in\N$. Define a complex $\wh{C}(q)^\ast(X; A)$ by
$$\wh{C}(q)^k(X; A)=\left\{\begin{array}{ll} C^k(X; A)\times C^{k-1}(X; \R)\times\Omega^k
(X), & \textrm{ for } k\geq q \\ C^k(X; A)\times C^{k-1}(X; \R), & \textrm{ for } k
<q \end{array}\right..$$
with differential $\wh{d}:\wh{C}(q)^k(X; A)\to\wh{C}(q)^{k+1}(X; A)$ by
\begin{displaymath}
\begin{split}
\wh{d}(c, T, \omega)&=(\delta c, \der(\omega)-j_A(c)-\delta T, d\omega)\qquad \textrm{ for }
k\geq q\\
\wh{d}(c, T)&=\left\{\begin{array}{ll} (\delta c, -j_A(c)-\delta T, 0), & \textrm{ for }
k=q-1\\ (\delta c, -j_A(c)-\delta T), & \textrm{ for } k<q-1 \end{array}\right.,
\end{split}
\end{displaymath}
where \dis{\der(\omega)(c)=\int_c\omega} for $c\in C_k(X)$. One can easily check that
$\wh{d}^2=0$. $(\wh{C}(q)^\ast(X; A), \wh{d})$ is the differential cocycle model of
differential characters.
\end{defi}
Let
$$\wh{Z}(k)^k(X; A):=\ker(\wh{d}:\wh{C}(k)^k(X; A)\to\wh{C}(k)^{k+1}(X; A))$$
be the groups of cocycles and $\wh{B}(k)^k(X; A)$ the subgroup of coboundaries. Its 
cohomology group is denoted by \dis{\wh{H}(k)^k(X; A)}. In \cite[Section 3.2]{HS05} it 
is proved that the map $f:\wh{H}(k)^k(X; A)\to\wh{H}^k(X; \R/A)$ given by
\begin{equation}\label{eq 4.5}
f([c, T, \omega])=T|_{Z_{k-1}(X)}\mod A
\end{equation}
is an isomorphism of graded commutative rings.

In section \ref{s 3.1} we will replace $H^\bullet(X; \R/A)$ by the cohomology of the
cochain complex of the mapping cone induced by the inclusion of coefficients 
$j_A:A\hookrightarrow\R$. We briefly recall the construction and in particular the maps 
involved (for details, see \cite{W94}). Define the cochain complex $\set{C(X; j_A), 
\delta_{j_A}}$, where
$$C(X; j_A)^k=C^{k+1}(X; A)\oplus C^k(X; \R),\qquad\delta_{j_A}(u, v)=(-\delta u, \delta
v-j_A(u)).$$
Denote by $Z(X; j_A)^k$ and $B(X; j_A)^k$ the kernel of $\delta_{j_A}^k$ and the image 
of $\delta_{j_A}^{k-1}$ respectively. It is well known that the cohomology of cochain
complex $\set{C(X; j_A), \delta_{j_A}}$ satisfies the following short exact sequence
\cdd{0 @>>> C^k(X; \R) @>\alpha>> C^k(X; j_A) @>\gamma>> C^{k+1}(X; \Q) @>>> 0}
where $\alpha(c)=(0, c)$ and $\gamma(b, c)=-b$. Henceforth we will write all the induced
maps on the cochain level and the cohomology level by the same symbol. The induced long 
exact sequence is given by
\cdd{\scriptstyle\cdots @>>> \scriptstyle H^k(X; \Q) @>j_A>> \scriptstyle H^n(X; \R)
@>\alpha>> \scriptstyle H^k(C(X; j_A)) @>\gamma>> \scriptstyle H^{k+1}(X; \Q) @>>>
\scriptstyle\cdots}
By the Five lemma, $H^\bullet(C(X; j_A))$ is isomorphic to $H^\bullet(X; \R/A)$.

\section{Main results}

In this section we construct hexagons for differential characters and differential
$K$-theory on the cocycle level and prove that each of these hexagons induces the
corresponding hexagon on the cohomology level. We take $H$ to be the singular cohomology. 
See Remark \ref{remark 1} for the necessary modification of the proof of Theorem 
\ref{thm 1} if $H$ is taken to be the simplicial cohomology.

\subsection{Refined hexagon for differential characters}\label{s 3.1}

We refine the hexagon for differential characters $\wh{H}(X; \R/A)$ with coefficients
in any fixed proper subring $A$ of $\R$ to the cocycle level.
\begin{thm}\label{thm 1}
Let $X$ be a smooth manifold. Consider the following diagram
\begin{equation}\label{eq 5}
\xymatrix{\scriptstyle 0 \ar[dr] & \scriptstyle & \scriptstyle & \scriptstyle &
\scriptstyle 0 \\
\scriptstyle & \scriptstyle Z(X; j_A)^{k-1} \ar[dr]_i \ar[rr]^\beta & \scriptstyle &
\scriptstyle Z^k(X; A)\times B^k(X; \R) \ar[ur] \ar[dr]^{\ch} & \scriptstyle \\
\scriptstyle \Omega^{k-1}_{d=0}(X) \ar[ur]^b \ar[dr]_\iota & \scriptstyle & \scriptstyle
\wh{Z}(k)^k(X; A) \ar[ur]^I \ar[dr]_R & \scriptstyle & \scriptstyle Z^k(X; \R) &
\scriptstyle \\ \scriptstyle & \scriptstyle \Omega^{k-1}(X) \ar[ur]^a \ar[rr]_d &
\scriptstyle & \scriptstyle\Omega^k_A(X) \ar[ur]_{\der} \ar[dr] & \scriptstyle \\
\scriptstyle 0 \ar[ur] & \scriptstyle & \scriptstyle & \scriptstyle & \scriptstyle 0 }
\end{equation}
where the maps will be defined in the proof. Every square and triangle commutes, the main
diagonal sequence is exact, the map $a$ is injective and the map $I$ is surjective.
Moreover, $(\ref{eq 5})$ induces $(\ref{eq 1})$, i.e., the commutativity of the squares
and triangles and the exactness of the diagonal sequences of (\ref{eq 1}) can be deduced
from (\ref{eq 5}).
\end{thm}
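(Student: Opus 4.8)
The plan is to establish Theorem \ref{thm 1} in two stages: first define all the maps in (\ref{eq 5}) explicitly, then verify the local properties (commutativity of each square and triangle, exactness along the main diagonal, injectivity of $a$, surjectivity of $I$), and finally show that passing to cohomology recovers (\ref{eq 1}). For the definitions, the natural choices are: $R(c,T,\omega)=\omega$; $a(\eta)=(0,-\eta,d\eta)$, which visibly lands in $\wh{Z}(k)^k(X;A)$ since $\wh{d}(0,-\eta,d\eta)=(0,\der(d\eta)-j_A(0)+\delta\eta,0)=0$ using $\der(d\eta)=\delta\eta$ by Stokes; $I(c,T,\omega)=(c,-\delta T)$ or rather the pair recording the cocycle $c$ together with the exact correction $\der(\omega)-j_A(c)=\delta T$, landing in $Z^k(X;A)\times B^k(X;\R)$; $\iota:\Omega^{k-1}(X)\hookrightarrow\Omega^{k-1}(X)$ the identity (or the obvious inclusion into the bottom complex); $b:\Omega^{k-1}_{d=0}(X)\to Z(X;j_A)^{k-1}$ sending $\eta\mapsto(0,\eta)$ (a closed form is in particular a real cocycle, and $\delta_{j_A}(0,\eta)=(0,\delta\eta)=0$); $\beta:Z(X;j_A)^{k-1}\to Z^k(X;A)\times B^k(X;\R)$ the composite of $\gamma$-type data with the coboundary; $\der:\Omega^k_A(X)\to Z^k(X;\R)$ the de Rham map $\omega\mapsto\der(\omega)$; and $\ch:Z^k(X;A)\times B^k(X;\R)\to Z^k(X;\R)$ given by $(c,\delta T)\mapsto j_A(c)+\delta T$ (the real cocycle cohomologous to $j_A(c)$). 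The diagonal maps $i$ on $Z(X;j_A)^{k-1}\to\wh{Z}(k)^k(X;A)$ and the map into the top-right will be defined so that $i$ records the cocycle/trivialization data with zero curvature.

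Once the maps are pinned down, the verification of each square and triangle is a direct computation with the explicit formulas, and the two diagonal exact sequences unwind as follows. The main diagonal is $\Omega^{k-1}_{d=0}(X)\xrightarrow{b}Z(X;j_A)^{k-1}\xrightarrow{i}\wh{Z}(k)^k(X;A)\xrightarrow{R}\Omega^k_A(X)$; here $R\circ i=0$ because flat differential cocycles have zero curvature form, $i\circ b=0$ up to the appropriate identification, exactness at $Z(X;j_A)^{k-1}$ amounts to: a real-cocycle-with-$A$-coboundary-data pair whose associated flat differential cocycle is trivial must come from a closed form, and exactness at $\wh{Z}(k)^k(X;A)$ says a differential cocycle with $R=0$ is of the form $i(\text{something})$, which is essentially the definition of the data. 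The injectivity of $a$ is immediate from the formula $a(\eta)=(0,-\eta,d\eta)$, and the surjectivity of $I$ follows because given $(c,\delta T)\in Z^k(X;A)\times B^k(X;\R)$ one can lift it by choosing any smooth form $\omega$ representing the real cohomology class of $j_A(c)$ (possible since $[j_A(c)]$ is in the image of de Rham, using that $A\subset\R$ and de Rham's theorem), and then $(c,T,\omega)$ is a cocycle with $I$-image the given pair. I expect this surjectivity — the existence of a smooth form with prescribed periods realizing an integral-over-$A$ class — to be the one place needing a genuine appeal to de Rham theory rather than pure formula-chasing, though it is standard.

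The last and most delicate part is showing (\ref{eq 5}) \emph{induces} (\ref{eq 1}). The strategy is to apply the cohomology functor (take cocycles modulo coboundaries) entrywise and check that each entry of (\ref{eq 5}) maps onto the corresponding entry of (\ref{eq 1}): $H^\bullet(C(X;j_A))\cong H^\bullet(X;\R/A)$ was recalled in the excerpt; $\wh{H}(k)^k(X;A)\cong\wh{H}^k(X;\R/A)$ is Hopkins--Singer's isomorphism (\ref{eq 4.5}); $Z^k(X;A)\times B^k(X;\R)$ modulo the image of $\wh{B}$-related coboundaries must be checked to give $H^k(X;A)$; the bottom-row entries $\Omega^{k-1}(X)$ and $\Omega^k_A(X)$ descend to $\Omega^{k-1}(X)/\Omega^{k-1}_A(X)$ and $\Omega^k_A(X)$ respectively after quotienting by the image of closed forms (note $\Omega^k_A(X)$ already consists of closed forms so survives unchanged as in (\ref{eq 1})); $Z^k(X;\R)$ passes to $H^k(X;\R)$; and $\Omega^{k-1}_{d=0}(X)$ passes to $H^{k-1}(X;\R)=H^{k-1}_{\dr}(X)$. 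The main obstacle is the bookkeeping: one must check that the coboundary relations used in each entry's cohomology are \emph{compatible} under the maps of (\ref{eq 5}), i.e. that a coboundary in one entry maps to a coboundary in the adjacent entry, so that each map of (\ref{eq 5}) genuinely descends, and that the descended maps are exactly (up to the above isomorphisms) the maps $\alpha$, $i$, $-\beta$, $\ch$, $I$, $R$, $a$, $s$, $\der$ of (\ref{eq 1}) with the claimed signs. Once these identifications are in place, the commutativity of squares/triangles and exactness of the diagonals in (\ref{eq 1}) follow formally from their cocycle-level counterparts by functoriality of passing to cohomology, together with the fact that exactness at the cocycle level (of the main diagonal) plus the surjectivity of $I$ and injectivity of $a$ are enough to force exactness at the cohomology level for the corresponding diagonal, while the other diagonal's exactness in (\ref{eq 1}) comes from the long exact sequence of $C(X;j_A)$ already recalled.
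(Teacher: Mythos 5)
Your overall architecture coincides with the paper's proof (explicit cochain-level formulas for all nine maps, direct verification of the squares and triangles, then descent to cohomology via the Hopkins--Singer isomorphism (\ref{eq 4.5}) and the mapping-cone identification $H^\bullet(C(X;j_A))\cong H^\bullet(X;\R/A)$), and your description of the descent step is essentially the paper's. However, three of your concrete steps are wrong or have gaps. First, $a(\eta)=(0,-\eta,d\eta)$ is not a cocycle: with $\wh{d}(c,T,\omega)=(\delta c,\der(\omega)-j_A(c)-\delta T,d\omega)$, the middle slot of $\wh{d}(0,-\der(\eta),d\eta)$ is $\der(d\eta)+\delta\der(\eta)=2\,\delta\der(\eta)\neq 0$ in general; your own verification, carried out correctly, refutes the formula. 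The correct definition is $a(\eta)=(0,\der(\eta),d\eta)$, and the second slot must be the cochain $\der(\eta)$, not the form $\eta$. Second, you have misidentified the main diagonal. It runs from the top-left $0$ to the bottom-right $0$, namely $0\to Z(X;j_A)^{k-1}\xrightarrow{\ i\ }\wh{Z}(k)^k(X;A)\xrightarrow{\ R\ }\Omega^k_A(X)\to 0$; the map $b$ belongs to the left-hand square, not to this sequence, and your claim that $i\circ b=0$ ``up to identification'' is false: $i\circ b=a\circ\iota$ sends a closed form $\eta$ to $(0,\der(\eta),0)$, which is generally a nonzero cocycle. Exactness at $Z(X;j_A)^{k-1}$ only asserts injectivity of $i$; the substantive content, which you do state, is $\ker R=\im i$ (if $\omega=0$ then $j_A(c)+\delta T=0$, so $(-c,T)\in Z(X;j_A)^{k-1}$ maps to $(c,T,0)$) together with surjectivity of $R$.

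Third, your surjectivity argument for $I$ is incomplete. Choosing $\omega$ whose de Rham class matches $[j_A(c)]$ only gives $\der(\omega)-j_A(c)-\delta T=\delta T'$ for some $T'\in C^{k-1}(X;\R)$; for $(c,T,\omega)$ to lie in $\wh{Z}(k)^k(X;A)$ you need the \emph{cochain-level} identity $\der(\omega)-j_A(c)-\delta T=0$. The paper closes this gap by correcting $\omega$ to $\omega-d\eta$ with $\eta\in\Omega^{k-1}(X)$ chosen so that $\der(d\eta)=\delta T'$, after which $(c,T,\omega-d\eta)$ is a genuine cocycle mapping to $(c,\delta T)$. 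This correction step is exactly the ``genuine appeal to de Rham theory'' you anticipated, but as written your lift need not be a cocycle. None of these issues affects the descent argument in your final paragraph, which is sound and agrees with the paper, but the first two must be repaired before the cocycle-level diagram you verify is actually the diagram (\ref{eq 5}).
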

Note that the off diagonal sequence is not exact, however.
\begin{proof}
The maps are defined as follows:
\begin{displaymath}
\begin{split}
I(c, T, \omega)&=(c, \delta T),\\
R(c, T, \omega)&=\omega,\\
\der(\omega)(c)&=\int_c\omega,\qquad\textrm{ where } c\in C_k(X),\\
a(\eta)&=(0, \der(\eta), d\eta),\\
\ch(u, \delta S)&=j_A(u)+\delta S,\\
\beta(u, v)&=(-u, \delta v),\\
b(\omega)&=(0, \der(\omega)),\\
\iota(\omega)&=\omega,\\
i(u, v)&=(-u, v, 0).
\end{split}
\end{displaymath}

The injectivity of the maps $a$ and $i$ are obvious.

For the surjectivity of $R$, let $\omega\in\Omega^k_A(X)$. Then there exists a unique
$[c]\in H^k(X; A)$ such that $\der([\omega])=j_A[c]$. Thus $\der(\omega)=j_A(c)+\delta
T\in Z^k(X; \R)$ for some $T\in C^{k-1}(X; \R)$. Note that $(c, T, \omega)\in\wh{Z}(k)^k
(X; A)$ and $R(c, T, \omega)=\omega$.

For the surjectivity of $I$, let $(c, \delta T)\in Z^k(X; A)\times B^k(X; \R)$. Since
there exists a unique de Rham class $[\omega]\in H^k_{\dr}(X)$ such that $[j_A(c)+\delta
T]=\der([\omega])$, i.e., there exists $T'\in C^{k-1}(X; \R)$ such that
$$\der(\omega)-j_A(c)-\delta T=\delta T'.$$
Choose a representative $[d\eta]$ of $0=\der^{-1}([\delta T'])$, where $\eta\in
\Omega^{k-1}(X)$. We may assume $\der(d\eta)=\delta T'$, and therefore
$$\der(\omega-d\eta)-j_A(c)=\delta T.$$
It follows that $(c, T, \omega-d\eta)\in\wh{Z}(k)^k(X; A)$ and $I(c, T, \omega-d\eta)=
(c, \delta T)$.

To prove the exactness of the main diagonal sequence, note that $R\circ i=0$ follows
directly from the definitions of $R$ and $i$. Let $(c, T, \omega)\in\ker(R)$. Then
$\omega=0$ and therefore $\delta T+j_A(c)=0$. Thus $(-c, T)\in Z(X; j_A)^{k-1}$ and 
$i(-c, T)=(c, T, 0)$. Therefore $\im(i)=\ker(R)$.

For the commutativity of the right square, since $(c, T, \omega)\in\wh{Z}(k)^k(X; A)$
implies $\delta T=\omega-c$, we have
$$\ch(I(c, T, \omega))=\ch(c, \delta T)=r(c)+\delta T=\der(R(\omega)).$$

Note that the commutativity of the upper and the lower triangles; i.e., $R\circ a=d$ and
$I\circ i=\beta$, follows directly from definitions of $R$, $a$, $i$ and $\beta$.
Similarly, the commutativity of the left square; i.e., $i\circ b=a\circ\iota$, follows
directly from the definitions of the maps involved.

We prove that (\ref{eq 5}) induces (\ref{eq 1}), i.e., when $\wh{Z}(k)^k(X; A)$ is
quotient out by $\wh{B}(k)^k(X; A)$, an appropriate subgroup will be quotient out in
every other entry in (\ref{eq 5}) and the resulting diagram and the corresponding
induced maps, which will be denoted by the same symbols, coincide with (\ref{eq 1})
under the isomorphism (\ref{eq 4.5}).

To prove the induced map $R:\wh{H}(k)^k(X; A)\to\Omega^k_A(X)$ is well defined, note that
$R(\wh{B}(k)^k(X; A))=0$, which follows from that fact that if $\wh{d}(c, T)=(\delta c,
-c-\delta T, 0)\in\wh{B}(k)^k(X; A)$, then $R(\delta c, -c-\delta T, 0)=0$.

Note that $I(\wh{d}(c, T))=I(\delta c, -c-\delta T, 0)=(\delta c, -\delta c)\in B^k(X; A)
\times B^k(X; \R)$. Thus the induced map
$$I:\wh{H}(k)^k(X; A)\to\frac{Z^k(X; A)\times B^k(X; \R)}{B^k(X; A)\times B^k(X; \R)}
\cong H^k(X; A)$$
is well defined.

Let $\eta\in\Omega^{k-1}_A(X)$. Then
$$a(\eta)=(0, \eta, d\eta)=(0, \eta, 0)=\wh{d}(\eta, 0)\in\wh{B}(k)^k(X; A).$$
Thus the induced map \dis{a:\frac{\Omega^{k-1}(X)}{\Omega^{k-1}_A(X)}\to\wh{H}(k)^k(X; A)}
is well defined.

Finally, if $(u, v)\in Z(X; j_A)^{k-1}$ is a coboundary; i.e., there exists $(\wt{u}, 
\wt{v})\in C(X; j_A)^{k-2}$ such that $(u, v)=\delta_{j_A}(\wt{u}, \wt{v})=(-\delta\wt{u},
\delta\wt{v}-j_A(\wt{u}))$, then
$$i((u, v))=i(\delta_{j_A}(\wt{u}, \wt{v}))=i(-\delta\wt{u}, \delta\wt{v}-j_A(\wt{u}))=
(\delta\wt{u}, \delta\wt{v}-j_A(\wt{u}), 0)=\wh{d}(\wt{u}, \wt{v}).$$
Thus the induced map $i:H^{k-1}(X; j_A)\to\wh{H}(k)^k(X; A)$ is well defined. 

We take the quotient $Z^k(X; \R)/B^k(X; \R)$ to get $H^k(X; \R)$ in the right-most entry 
of (\ref{eq 5}). Similarly we take the quotient $\Omega^{k-1}_{d=0}(X)/\im(d)$ in the 
left-most entry in (\ref{eq 5}) to get $H_{\dr}^{k-1}(X)$. By the de Rham isomorphism we 
get $H^{k-1}(X; \R)$, and the composition of the inverse of the de Rham isomorphism with 
the two maps out of $H^{k-1}_{\dr}(X)$ in (\ref{eq 5}) are equal to the ones in (\ref{eq 1}) 
respectively.

It is easy to see that all the induced maps of (\ref{eq 5}) coincide with the maps in
(\ref{eq 1}) by the isomorphism (\ref{eq 4.5}). Thus (\ref{eq 5}) induces (\ref{eq 1}).
\end{proof}

\begin{remark}\label{remark 1}~~~~~~~~~~~
\begin{enumerate}
  \item One can see that all the maps in the hexagon (\ref{eq 5}) are natural; i.e., they 
        respect pullback by smooth maps. Thus the hexagon in Theorem \ref{thm 1} can be
        regarded as a diagram of functors and natural transformations.
  \item If $H$ is taken to be the simplicial cohomology, one must extend smooth 
        differential forms on $X$ to piecewise linear differential forms \cite{S75} to 
        get the naturality of the hexagon (\ref{eq 5}). It is motivated by \cite{D76} 
        when proving the naturality of the de Rham theorem (see also 
        \cite[Chapter 9]{GM14}). To explain the reason of such an extension, note that 
        if we take a smooth triangulation $K$ of the smooth manifold $X$ in (\ref{eq 5}) 
        and take $Z^\bullet$ to be simplicial cocycles, one loses the naturality of the 
        maps in (\ref{eq 5}) as differential forms cannot be pulled back by simplicial 
        maps. To remedy this deficiency, one has to extend to piecewise linear 
        differential form, which is a family of differential forms 
        $$\set{\omega_\sigma|\sigma \textrm{ is a closed simplex of } K}$$ 
        such that each $\omega_\sigma$ is required to be smooth on $\sigma$ and to be
        compatible when restricting to its faces. If we take a smooth triangulation $K$ 
        of $X$, the de Rham complex $\set{\Omega(X), d}$ of smooth differential forms 
        on $X$ embeds into the de Rham complex $\set{\Omega(\abs{K}), d}$ of piecewise 
        linear differential forms on $\abs{K}$. The proof of (\ref{eq 5}) remains the 
        same and we still get the naturality.
\end{enumerate}
\end{remark}

\subsection{Refined hexagon for differential $K$-theory}
In this subsection we construct a hexagon for differential $K$-theory on the cocycle
level and prove that it induces the hexagon on the cohomology level.

Instead of using the differential cocycle model \cite[Section 4.4]{HS05} of differential 
$K$-theory, we use the model of differential $K$-theory given in \cite{FL10}.

Let $X$ be a compact manifold. Recall from Example \ref{exam 2} that $\wh{K}(X):=
\wh{\vect}(X)/\sim$. Let $\wh{\vect}^{-1}_{\LL}(X)$ be the sub semi-group of 
$\wh{\vect}(X)$ whose elements $(E, h, \nabla, \phi)$ satisfy the relation $\ch(\nabla)
-\rk(E)=-d\phi$.

Define an abelian monoid
$$\vect'(X):=\set{(E, \omega)\in\vect(X)\times\Omega^{\even}_{d=0}(X)|[\omega]=j_\Q
(\ch(E))}.$$
Two elements of the form $(E, \omega)$ and $(F, \beta)$ are equal if $E\cong F$ and
$\omega=\beta\mod\Omega^{\even}_{\exact}(X)$. Define $K'(X):=K(\vect'(X))$ to be the
symmetrization of $\vect'(X)$. We write the class of $(E, \omega)\in\vect'(X)$ in $K'(X)$
as $[E, \omega]$.
\begin{lemma}\label{lemma 1}
For any compact manifold $X$, $K'(X)\cong K(X)$.
\end{lemma}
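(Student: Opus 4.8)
The plan is to construct mutually inverse homomorphisms between $K'(X)$ and $K(X)$ and check they are well defined. First I would define a map $\Phi: K'(X) \to K(X)$ by $\Phi([E,\omega]) = [E]$, simply forgetting the differential form. This is clearly a monoid homomorphism on $\vect'(X)$: if $(E,\omega) = (F,\beta)$ in $\vect'(X)$ then $E \cong F$, so $[E] = [F]$ in $K(X)$; and it respects direct sums, hence descends to the Grothendieck group. So $\Phi$ is a well-defined group homomorphism. The content is in going back.

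For the inverse, I would define $\Psi: K(X) \to K'(X)$ as follows. Given a vector bundle $E \to X$, choose a Hermitian metric and compatible connection $\nabla^E$; then $\ch(\nabla^E)$ is a closed even form representing $j_{\Q}(\ch(E))$, so $(E, \ch(\nabla^E)) \in \vect'(X)$. Set $\Psi_0(E) = [E, \ch(\nabla^E)] \in K'(X)$. The key point is independence of the choice of metric and connection: if $\nabla_0, \nabla_1$ are two compatible connections on $E$, then $\ch(\nabla_1) - \ch(\nabla_0) = d\,\CS(\nabla_0, \nabla_1) \in \Omega^{\even}_{\exact}(X)$, so $(E, \ch(\nabla_1)) = (E, \ch(\nabla_0))$ in $\vect'(X)$ by the defining equivalence. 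Thus $\Psi_0$ depends only on the isomorphism class of $E$, and it is additive under direct sum (using $\ch(\nabla^E \oplus \nabla^F) = \ch(\nabla^E) + \ch(\nabla^F)$ and $\CS$ additivity), so it extends to a group homomorphism $\Psi: K(X) \to K'(X)$, $\Psi([E]-[F]) = [E,\ch(\nabla^E)] - [F,\ch(\nabla^F)]$.

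Then I would verify the two compositions are the identity. The composite $\Phi \circ \Psi$ sends $[E] - [F]$ to $[E] - [F]$, so it is the identity on $K(X)$ directly from the definitions. For $\Psi \circ \Phi$, given $[E,\omega] \in K'(X)$ we must show $[E, \ch(\nabla^E)] = [E,\omega]$ in $K'(X)$, i.e. that $\omega - \ch(\nabla^E) \in \Omega^{\even}_{\exact}(X)$. This holds because both $\omega$ and $\ch(\nabla^E)$ are closed and represent the same real cohomology class $j_{\Q}(\ch(E))$, so their difference is closed and cohomologous to zero in de Rham cohomology; by the de Rham theorem a closed form that is exact in cohomology is exact as a form, hence lies in $\Omega^{\even}_{\exact}(X)$. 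Therefore $(E, \omega) = (E, \ch(\nabla^E))$ in $\vect'(X)$, and $\Psi \circ \Phi$ is the identity on generators, hence on all of $K'(X)$.

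The main obstacle, such as it is, is the well-definedness of $\Psi$ rather than any of the identity checks: one must be careful that the Chern--Simons transgression argument genuinely places the difference of Chern character forms (for two connections, or for connections on isomorphic bundles transported by an isomorphism) into $\Omega^{\even}_{\exact}(X)$, and that these choices are compatible with passing to the Grothendieck group. Everything else is a formal consequence of the definitions in Example \ref{exam 2} together with the de Rham theorem, so the proof is short once $\Psi$ is set up carefully.
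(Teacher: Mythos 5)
Your proof is correct and is essentially the paper's argument: the paper defines the same forgetful map $f([E-F,\omega-\beta])=[E]-[F]$ and verifies it is a well-defined bijection, with injectivity resting on exactly the fact you use for $\Psi\circ\Phi=\id$ (two closed even forms representing the same de Rham class differ by an element of $\Omega^{\even}_{\exact}(X)$). Constructing the explicit inverse via $\ch(\nabla^E)$ rather than checking injectivity and surjectivity directly is only a cosmetic difference.
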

\begin{proof}
Note that every element of $K'(X)$ can be written as $[E, \omega]-[F, \beta]$.
$[E, \omega]-[F, \beta]=[E', \omega']-[F', \beta']$ if and only if there exists
$(G, \eta)\in\vect'(X)$ such that $E\oplus F'\oplus G\cong E'\oplus F\oplus G$
and $\omega-\beta=\omega'-\beta'\mod\Omega^{\even}_{\exact}(X)$.

Define a map $f:K'(X)\to K(X)$ by
$$f([E-F, \omega-\beta])=[E]-[F].$$
We prove that $f$ is a well defined group isomorphism. Suppose $[E-F, \omega-\beta]=
[E'-F', \omega'-\beta']\in K'(X)$, then there exists $(G, \eta)\in\vect'(X)$ such that
$E\oplus F'\oplus G\cong E'\oplus F\oplus G$ and $\omega-\beta=\omega'-\beta'\mod
\Omega^{\even}_{\exact}(X)$. Thus $[E]-[F]=[E']-[F']$, which implies that $f$ is well
defined. Obviously $f$ is a surjective group homomorphism. For the injectivity of $f$,
suppose $f([E, \omega]-[F, \beta])=[E]-[F]=0$. Then there exists $G\in\vect(X)$ such 
that $E\oplus G\cong F\oplus G$. Since $[\omega]-[\beta]=r(\ch(E)-\ch(F))=0$, it follows 
that $\omega=\beta\mod\Omega^{\even}_{\exact}(X)$. Thus $[E, \omega]-[F, \beta]=0\in 
K'(X)$.
\end{proof}
\begin{thm}\label{thm 2}
Let $X$ be a compact manifold. Consider the following diagram
\begin{equation}\label{eq 6}
\xymatrix{\scriptstyle & \scriptstyle \wh{\vect}^{-1}_{\LL}(X) \ar[dr]_i \ar[rr]^B &
\scriptstyle & \scriptstyle \vect'(X) \ar[dr]^{\ch'} & \scriptstyle \\ \scriptstyle
\Omega^{\odd}_{d=0}(X) \ar[ur]^\alpha \ar[dr]_\iota & \scriptstyle & \scriptstyle 
\wh{\vect}(X) \ar[ur]^{I'} \ar[dr]_R & \scriptstyle & \scriptstyle Z^{\even}(X; \R) & 
\scriptstyle \\ 
\scriptstyle & \scriptstyle \Omega^{\odd}(X) \ar[ur]^a \ar[rr]_d & \scriptstyle &
\scriptstyle \Omega^{\even}_{\BU}(X) \ar[ur]_{\der} & \scriptstyle }
\end{equation}
where all the maps will be defined in the proof. Every square and triangle commutes. The
maps $I'$ and $R$ are surjective and $I'\circ a=0$. Moreover, (\ref{eq 6}) induces
(\ref{eq 2}).
\end{thm}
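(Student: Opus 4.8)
The plan is to follow the proof of Theorem~\ref{thm 1}, with Hopkins--Singer cocycles replaced by Freed--Lott data and the mapping-cone group replaced by the flat semigroup $\wh{\vect}^{-1}_{\LL}(X)$. First I would fix the maps: $I'(E,h,\nabla,\phi)=(E,\ch(\nabla)+d\phi)$, $R(E,h,\nabla,\phi)=\ch(\nabla)+d\phi$, $B=I'\circ i$ (equivalently $B(E,h,\nabla,\phi)=(E,\rk E)$ by the defining relation of $\wh{\vect}^{-1}_{\LL}(X)$), $\ch'(E,\omega)=\der(\omega)$ with $\der(\omega)(c)=\int_c\omega$, $a(\phi)=(0,0,d,\phi)$ carried by the zero bundle, $\alpha$ the same expression on closed forms, and $\iota$, $i$, $d$ the evident inclusions and the exterior derivative. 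One checks each lands where claimed: $d\phi\in\Omega^{\even}_{\BU}(X)$ and $\der(\omega)\in Z^{\even}(X;\R)$ are immediate, $\alpha(\omega)\in\wh{\vect}^{-1}_{\LL}(X)$ because $\ch(\nabla)-\rk E$ and $-d\omega$ both vanish on a rank-zero bundle with $\omega$ closed, and $I'$, $B$ land in $\vect'(X)$ because $[\ch(\nabla)+d\phi]=[\ch(\nabla)]=j_\Q(\ch(E))$ by Chern--Weil. Granting this, every triangle and square commutes essentially by inspection: $R\circ a=d$, $I'\circ i=B$ and $i\circ\alpha=a\circ\iota$ are definitional, while $\ch'\circ I'=\der\circ R$ holds because both sides equal $\der(\ch(\nabla)+d\phi)$ — the cocycle refinement of the square~(\ref{eq 3}). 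Likewise $I'\circ a=0$, since $a(\phi)$ sits on the zero bundle, so $I'(a(\phi))=(0,d\phi)=(0,0)$ in $\vect'(X)$.

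Next I would establish the two surjectivity statements, which carry the only analytic content. For $R$: given $\omega\in\Omega^{\even}_{\BU}(X)$ there is $E\in\vect(X)$ with $j_\Q(\ch(E))=[\omega]$; fixing a metric $h$ and a compatible connection $\nabla$, the Chern--Weil form $\ch(\nabla)$ represents $[\omega]$, so $\omega-\ch(\nabla)=d\phi$ for some $\phi\in\Omega^{\odd}(X)$ and $R(E,h,\nabla,\phi)=\omega$. For $I'$: given $(E,\omega)\in\vect'(X)$, equip $E$ with any $h$, $\nabla$; since $[\ch(\nabla)]=j_\Q(\ch(E))=[\omega]$ one has $\ch(\nabla)=\omega\mod\Omega^{\even}_{\exact}(X)$, hence $I'(E,h,\nabla,0)=(E,\ch(\nabla))=(E,\omega)$ in $\vect'(X)$.

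It remains to show that (\ref{eq 6}) induces (\ref{eq 2}). One passes from $\wh{\vect}(X)$ to $\wh{K}_{\FL}(X)=\wh{\vect}(X)/\sim$, from $\vect'(X)$ to $K'(X)\cong K(X)$ by Lemma~\ref{lemma 1}, and from the semigroup $\wh{\vect}^{-1}_{\LL}(X)$ to Lott's model $K^{-1}_{\LL}(X;\R/\Z)$ of \cite{L94}, so the induced $i$ is the canonical inclusion of Example~\ref{exam 2}; in the remaining entries one quotients $\Omega^{\odd}_{d=0}(X)$ by $\Omega^{\odd}_{\exact}(X)$ to get $H^{\odd}_{\dr}(X)\cong H^{\odd}(X;\R)$, quotients $Z^{\even}(X;\R)$ by coboundaries to get $H^{\even}(X;\R)$, keeps $\Omega^{\even}_{\BU}(X)$, and quotients $\Omega^{\odd}(X)$ by $\Omega^{\odd}_{\BU}(X)$. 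One then checks each map descends: $I'$, $B$ and $\ch'$ descend because the class $[E]$ (respectively its Chern character) is unchanged under $\sim$; $R$ descends because the Chern--Simons term occurring in the relation $\sim$ is precisely a primitive of the difference of the Chern forms; $\alpha$ and $a$ depend only on the class of the form modulo $\Omega^{\odd}_{\exact}(X)$ (the $\phi$-slot of $\wh{\vect}$ already lives in $\Omega^{\odd}/\Omega^{\odd}_{\exact}$), so $\alpha$ descends to $H^{\odd}_{\dr}(X)\cong H^{\odd}(X;\R)$; the descent of $\iota$ to the map $s$ of Example~\ref{exam 2} uses only $\Omega^{\odd}_{\exact}(X)\subseteq\Omega^{\odd}_{\BU}(X)$; and the induced map $a\colon\Omega^{\odd}(X)\to\wh{K}_{\FL}(X)$ descends further to $\Omega^{\odd}(X)/\Omega^{\odd}_{\BU}(X)$ by the exact sequence~(\ref{eq 4}) for $\wh{K}_{\FL}$ together with the identity $\im\bigl(\ch\colon K^{-1}(X)\to\Omega^{\odd}(X)/\Omega^{\odd}_{\exact}(X)\bigr)=\Omega^{\odd}_{\BU}(X)/\Omega^{\odd}_{\exact}(X)$. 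A last check that the descended maps coincide with those listed in Example~\ref{exam 2}, via the isomorphism $f$ of Lemma~\ref{lemma 1}, then identifies the induced diagram with (\ref{eq 2}) and transports its commutativities and exactness.

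The step I expect to be the main obstacle is this last one. Unlike Theorem~\ref{thm 1}, two of the entries here are only semigroups, so ``inducing (\ref{eq 2})'' forces a passage to Grothendieck groups that must be checked compatible with the descended maps; the descent of $a$ to $\Omega^{\odd}(X)/\Omega^{\odd}_{\BU}(X)$ is not formal, since it uses the known structure of $\wh{K}_{\FL}(X)$ — namely the exact sequence~(\ref{eq 4}) — rather than the relation $\sim$ alone; and identifying $\wh{\vect}^{-1}_{\LL}(X)$ reduced by $\sim$ with $K^{-1}_{\LL}(X;\R/\Z)$ relies on the comparison in \cite{L94}. Everything else is a direct unwinding of definitions.
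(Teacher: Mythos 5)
Your proposal is correct and follows essentially the same route as the paper: the same definitions of the maps, the same inspection-level commutativity and surjectivity checks, and the same descent-to-quotients argument for inducing (\ref{eq 2}), including correctly isolating the descent of $a$ to $\Omega^{\odd}(X)/\Omega^{\odd}_{\BU}(X)$ as the one non-formal step. The only cosmetic difference is that the paper verifies that step by writing $\omega=\ch^{\odd}([g])+d\alpha$ and chasing $a(\ch^{\odd}([g]))$ through the flat $K$-theory exact sequence and the left-hand square, whereas you invoke the exact sequence (\ref{eq 4}) together with $\im(\ch^{\odd})=\Omega^{\odd}_{\BU}(X)/\Omega^{\odd}_{\exact}(X)$ -- the same argument in different packaging.
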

\begin{proof}
The maps are defined as follows:
\begin{displaymath}
\begin{split}
I'(E, h, \nabla, \phi)&=(E, \ch(\nabla)+d\phi),\\
R(E, h, \nabla, \phi)&=\ch(\nabla)+d\phi,\\
a(\omega)&=(0, 0, d, \omega),\\
i(E, h, \nabla, \phi)&=(E, h, \nabla, \phi),\\
B(E, h, \nabla, \phi)&=(E, \ch(\nabla)+d\phi),\\
\alpha(\omega)&=(0, 0, d, \omega),\\
\ch'(E, \omega)&=j_A(\der(\omega)),
\end{split}
\end{displaymath}
and $\iota$ and $\der$ are defined in a similar way as in Theorem \ref{thm 1}. 

The map $R$ is clearly surjective. 

For the surjectivity of $I'$, let $[E, \omega]\in K'(X)$. Since $\omega=\ch(\nabla)+
d\phi$ for some connection $\nabla$ on $E\to X$ and for some $\phi\in\Omega^{\odd}(X)$, 
it follows that $I'(E, h, \nabla, \phi)=[E, \ch(\nabla)+d\phi]=[E, \omega]$.

From the definitions, we immediately have $R\circ a=d$ and $I'\circ i=-B$. To prove the 
commutativity of the right hand square, let $(E, h, \nabla, \phi)\in\wh{\vect}(X)$. Then
\begin{equation}\label{eq 7.5}
\begin{split}
\ch'(I'(E, h, \nabla, \phi))&=\ch'(E, \ch(\nabla)+d\phi)=\int(\ch(\nabla)+d\phi)\\
&=\der(\ch(\nabla)+d\phi)=\der(R(E, h, \nabla, \phi)).
\end{split}
\end{equation}
For the commutativity of the left hand square, let $u\in Z^{\odd}(X; \R)$. Then
$$i(\alpha(\omega))=i(0, 0, d, \omega)=(0, 0, d, \omega)=a(\omega)=a(\iota(\omega)).$$
We prove that (\ref{eq 6}) induces (\ref{eq 2}). As in the proof of Theorem \ref{thm 1},
we show that once we quotient out $\wh{\vect}(X)$ by the equivalence relation $\sim$, every
other entry in (\ref{eq 8}) is quotient out by a suitable subgroup so that the induced hexagon
coincides with (\ref{eq 2}), i.e., all the entries and the corresponding maps are equal.

Let $\E_i=(E^i, h^i, \nabla^i, \phi^i)\in\wh{\vect}(X)$, where $i=1, 2$, be such that
$\E_1-\E_2=0\in\wh{K}_{\FL}(X)$, i.e., there exists $(F, h^F, \nabla^F, \phi^F)\in
\wh{\vect}(X)$ such that $E^1\oplus F\cong E^2\oplus F$ and
\begin{equation}\label{eq 8}
\phi^1-\phi^2=\CS(\nabla^2\oplus\nabla^F, \nabla^1\oplus\nabla^F)\mod\Omega^{\odd}_{\exact}
(X).
\end{equation}
Then
\begin{displaymath}
\begin{split}
R(\E_1-\E_2)&=\ch(\nabla^1)+d\phi^1-\ch(\nabla^2)-d\phi^2\\
&=d(\CS(\nabla^1\oplus\nabla^F, \nabla^2\oplus\nabla^F)+\phi^1-\phi^2)=0\textrm{ from
(\ref{eq 8})}.
\end{split}
\end{displaymath}
Thus the induced map $R:\wh{K}_{\FL}(X)\to\Omega^{\even}_{\BU}(X)$ is well defined.

The induced map $\ch':K'(X)\to H^{\even}(X; \R)$ is well defined since for $[E-F,
\omega-\beta]=0\in K'(X)$ and for any $z\in Z_{\even}(X)$, we have
$$\ch'([E, \omega]-[F, \beta])(z)=\ch'(E, \omega)(z)-\ch'(F, \beta)(z)=\int_z\omega-\beta
=\int_zd\alpha=0,$$
where $\alpha\in\Omega^{\odd}(X)$.

We claim that the induced map $I':\wh{K}_{\FL}(X)\to K'(X)$ is well defined. To see this,
suppose $\E=\F\in\wh{K}_{\FL}(X)$. Then there exists $(G, h^G, \nabla^G, \phi^G)\in
\wh{\vect}(X)$ such that $E\oplus G\cong F\oplus G$ and $\phi^F-\phi^E=\CS(\nabla^E\oplus
\nabla^G, \nabla^F\oplus\nabla^G)$, which implies that $d(\phi^F-\phi^E)=\ch(\nabla^E)-
\ch(\nabla^F)$. Then
$$I'(\E-\F)=[E, \ch(\nabla^E)+d\phi^E]-[F, \ch(\nabla^F)+d\phi^F]=0\in K'(X).$$
Define the map $I:\wh{K}_{\FL}(X)\to K(X)$ to be the composition
\cdd{\wh{K}_{\FL}(X) @>I'>> K'(X) @>f>\cong> K(X),}
where $f$ is given in Lemma \ref{lemma 1}. Thus
$$I(\E-\F)=f(I'(\E-\F))=f([(E-F, \omega-\beta)])=[E]-[F],$$
which coincides with the map $I:\wh{K}_{\FL}(X)\to K(X)$ in (\ref{eq 2}).

The induced map $\der:\Omega^{\even}_{\BU}(X)\to H^{\even}(X; \R)$ is defined to be the
composition
\cdd{\Omega^{\even}_{\BU}(X) @>\der>> Z^{\even}(X; \R) @>[\cdot]>> H^{\even}(X; \R)}
which coincide with the map $\der$ in (\ref{eq 2}). The induced map
$\alpha:H^{\odd}_{\dr}(X)\to K^{-1}_{\LL}(X; \R/\Z)$ is well defined and coincides with
the map $\alpha$ in (\ref{eq 2}) by the definition of $K^{-1}_{\LL}(X; \R/\Z)$. Since
$i:\wh{\vect}^{-1}_{\LL}(X)\to\wh{\vect}(X)$ is the canonical inclusion map, its induced
map $i:K^{-1}_{\LL}(X; \R/\Z)\to\wh{K}_{\FL}(X)$ is well defined and coincides with the 
map $i$ in (\ref{eq 2}).

We prove that the induced map \dis{a:\frac{\Omega^{\odd}(X)}{\Omega^{\odd}_{\BU}(X)}\to
\wh{K}_{\FL}(X)} is well defined. Let $\omega\in\Omega^{\odd}_{\BU}(X)$. Then $\omega=
\ch^{\odd}([g])+d\alpha$, where $g:X\to\U(n)$ is a smooth map for some $n\in\N$, $\alpha
\in\Omega^{\even}(X)$ and $\ch^{\odd}([g]):=\CS(g^{-1}dg, d)$. Here we have identified
$K^{-1}(X)\cong[X, \U]$. Since $a(\omega)=a(\ch^{\odd}([g]))+a(d\alpha)$ and $a(d\alpha)=
(0, 0, d, d\alpha)=0$, it suffices to show that $a(\ch^{\odd}([g]))=0$. Since $R\circ a=d$,
it follows from the exactness of the following sequence
\cdd{0 @>>> K^{-1}_{\LL}(X; \R/\Z) @>i>> \wh{K}_{\FL}(X) @>R>> \Omega^{\even}_{\BU}(X) 
@>>> 0}
that $a(\ch^{\odd}([g]))\in K^{-1}_{\LL}(X; \R/\Z)$. Consider the following sequence
$$\xymatrix{ & & K^{-1}_{\LL}(X; \R/\Z) \ar[dr]^i & \\ K^{-1}(X) \ar[r]^{\ch^{\odd}} &
H^{\odd}_{\dr}(X) \ar[ur]^\alpha \ar[dr]_\iota & & \wh{K}_{\FL}(X) \\ & & 
\frac{\Omega^{\odd}(X)}{\Omega^{\odd}_{\BU}(X)} \ar[ur]^a & }$$
Since the upper sequence is exact \cite[\S 7.21]{Kar87} and the square commutes, it
follows that
$$a(\ch^{\odd}([g]))=a(\iota(\ch^{\odd}([g])))=i(\alpha(\ch^{\odd}([g])))=i(0)=0.$$
Thus the map \dis{a:\frac{\Omega^{\odd}(X)}{\Omega^{\odd}_{\BU}(X)}\to\wh{K}_{\FL}(X)} 
is well defined.

Define $\ch:K(X)\to H^{\even}(X; \R)$ to be the composition
\cdd{K(X) @>f^{-1}>> K'(X) @>\ch'>> H^{\even}(X; \R)}
Note that this map $\ch:K(X)\to H^{\even}(X; \R)$ coincides with the map
$\ch:K(X)\to H^{\even}(X; \R)$ defined in (\ref{eq 2}).

The commutativity of the following square
\cdd{\wh{K}_{\FL}(X) @>I>> K(X) \\ @VRVV @VV\ch V \\ \Omega^{\even}_{\BU}(X) @>>\der> 
H^{\even}(X; \R)}
follows from the fact that $\ch\circ I=\ch'\circ f^{-1}\circ f\circ I'=\ch'\circ I'=
\der\circ R$, where the last equality follows from (\ref{eq 7.5}). Thus (\ref{eq 6})
induces (\ref{eq 2}).
\end{proof}
\begin{remark}
In Theorem \ref{thm 1} we have the exact sequence
\cdd{0 @>>> Z(X; j_A)^{k-1} @>i>> \wh{Z}(k)^k(X; \R/A) @>R>> \Omega^k_A(X) @>>> 0}
while in Theorem \ref{thm 2} the ``corresponding" sequence
\cdd{0 @>>> \wh{\vect}^{-1}_{\LL}(X) @>i>> \wh{\vect}(X) @>R>> \Omega^{\even}_{\BU}(X)
@>>> 0}
is not exact and not even $R\circ i=0$. However, when these entries are quotient out by
the coboundaries we have the exactness and $R\circ i=0$. This is due to the fact that
$\wh{\vect}_{\LL}^{-1}(X)$ is a sub semi-group of $\wh{\vect}(X)$ but not a sub-monoid
and, after symmetrizing, $K^{-1}_{\LL}(X; \R/\Z)$ is a \emph{proper} subgroup of
$K(\wh{\vect}_{\LL}^{-1}(X))$ consisting of elements with virtual rank zero.
\end{remark}
\bibliographystyle{amsplain}
\bibliography{MBib}
\end{document}